\newtheorem*{remark 1}{Remark}
\newtheorem{lemma}{Lemma}
\newtheorem{theorem}{Theorem}
\newtheorem{remark}{Remark}
\numberwithin{equation}{section}
\newcommand\nc{\newcommand}
\nc\hd{\widehat{D}}
\nc\kp{\kappa}
\title{Linear Stability of the 2D Irrotational Circulation Flow around An Elliptical Cylinder}
\author{Xiao Ma}
\address{Princeton University}
\email{xiaom@math.princeton.edu}
\begin{document}
\begin{abstract}
In this article we prove a linear inviscid damping result with optimal decay rates of the 2D irrotational circulation flow around an elliptical cylinder. In our result, all components of the asymptotic velocity field do not vanish and the asymptotic flow lines are not ellipse any more. 
\end{abstract}

\maketitle

\section{Introduction}

In this paper, we study the linear stability of the 2D irrotational circulation flow around an elliptical cylinder. 

An elliptical cylinder is the region $\mathcal{C}=\{(X,Y,Z): \frac{X^2}{A^2}+\frac{Y^2}{B^2}\le 1\}\subseteq \mathbb{R}^3$. In this paper we consider an irrotational flow that is independent of $Z$. This flow circulates around cylinder $\mathcal{C}$ in domain $\mathbb{R}^3\backslash \mathcal{C}$ and is the unique irrotational solution of the 2D Euler equation

\begin{equation}
\left\{\begin{array}{cc}
     \partial_{t} \omega_0+\nabla^{\perp}\psi_0\cdot\nabla\omega_0=0.  \\
     \omega_0=\Delta \psi_0=0
\end{array}\right.
\end{equation}
with boundary condition $\psi|_{\partial\mathcal{C}}=0$, $\psi|_{\infty}\sim a+b\ ln(X^2+Y^2)$. Here the Euler equation is written in the vorticity-stream function formulation. $\omega_0$ is the vorticity and $\psi_0$ is the stream function. $\nabla=\begin{bmatrix} \partial_X\\ \partial_Y \end{bmatrix}$ and $\nabla^{\perp}=\begin{bmatrix} -\partial_Y\\ \partial_X \end{bmatrix}$.

The flow is illustrated by the following figure.

\begin{figure}[h]
\includegraphics[scale=0.2]{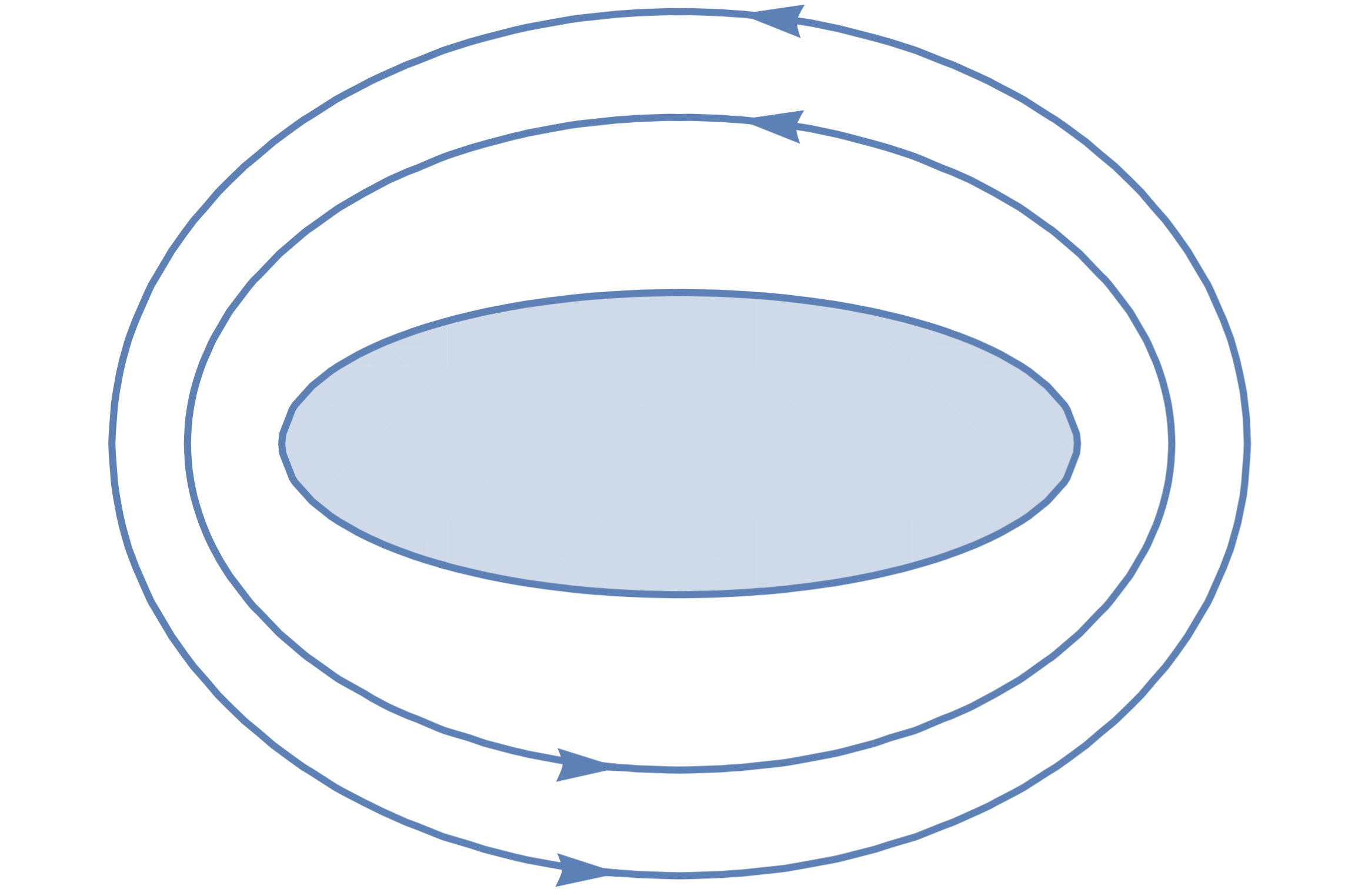}
\centering
\end{figure}

It can also be described by explicit formulas. First we can find $C$, $\Psi$ such that the parameters $A$, $B$ of the region $\mathcal{C}$ can be written as $A=Ccosh(\Psi)$, $B=Csinh(\Psi)$. Then the stream function $\psi_0(X,Y)$ of the flow is defined implicitly by the following equation
\begin{equation}\label{eq.stream}
    \frac{X^2}{C^2cosh^2(\psi_0+\Psi)}+\frac{Y^2}{C^2sinh^2(\psi_0+\Psi)}= 1.
\end{equation}

The velocity field of the flow is given by $u_0=\nabla^{\perp}\psi_0=\begin{bmatrix} -\partial_Y \psi_0\\ \partial_X\psi_0 \end{bmatrix}$.

A direct calculation shows that $\omega_0=\Delta \psi_0=0$, which proves that the flow is irrotational. The linearized Euler equation around this flow is

\begin{equation}\label{eq.perturbedeuler}
\left\{\begin{array}{ll}
     \partial_{t} \omega+\nabla^{\perp}\psi_0\cdot\nabla\omega=0. \qquad (X,Y)\in \mathbb{R}^2\backslash \mathcal{C}  \\
     \omega=\Delta \psi,\qquad \psi|_{\partial\mathcal{C}}=0,\qquad \psi|_{\infty}\sim const.
\end{array}\right.
\end{equation}
Here $\omega_0+\omega$, $\psi_0+\psi$ are the total vorticity and stream function of the perturbed flow. If $\omega_0\ne 0$, there should be a new term $\nabla^{\perp}\psi\cdot\nabla\omega_0$.

Since  $\omega_0=0$, this flow is a potential flow, so there exists $\varphi_{0}(X,Y)$ such that $u_0=\nabla \varphi_{0}$. This function is called the potential function of the flow.

In this paper we shall prove the following theorem which confirms that the flow governed by the linearzied Euler equation (\ref{eq.perturbedeuler}) is asymptotically stable under small perturbation.

\begin{theorem}\label{th.main} Assume that the perturbed flow governed by (\ref{eq.perturbedeuler}) has smooth initial data $\omega(0)$, $\psi(0)$. Then we have the following estimates

\begin{equation}\label{eq.main1}
    ||\psi(t)-\psi^{av}||_{L^2(\mathbb{R}^2\backslash \mathcal{C})}\lesssim (1+t)^{-2}||w\omega(0)||_{H^2(\mathbb{R}^2\backslash \mathcal{C})}
\end{equation}
     
\begin{equation}\label{eq.main2}
    ||u^{||}-u^{||}_{av}||_{L^2(\mathbb{R}^2\backslash \mathcal{C})}\lesssim (1+t)^{-1}||w\omega(0)||_{H^2(\mathbb{R}^2\backslash \mathcal{C})}
\end{equation}

\begin{equation}\label{eq.main3}
    ||u^{\perp}-u^{\perp}_{av}||_{L^2(\mathbb{R}^2\backslash \mathcal{C})}\lesssim (1+t)^{-2}||w\omega(0)||_{H^2(\mathbb{R}^2\backslash \mathcal{C})},
\end{equation}
where $u^{||}=\frac{u_{0}}{|u_0|}\cdot u$ and $u^{\perp}=\frac{\nabla\psi_0}{|\nabla\psi_0|}\cdot u$ are components of the velocity fields that are parallel (resp. perpendicular) to the flow line $\psi_0=c$. $w(X,Y)=(X^2+Y^2)^2$. $\psi^{av}$, $u^{||}_{av}$, $u^{\perp}_{av}$ are defined by following equations

\begin{equation}\label{eq.main4}
    \psi^{av}(X,Y)=r(\psi_0(X,Y))+s(\psi_0(X,Y)) cos(2\varphi_0(X,Y)),
\end{equation}

\begin{equation}\label{eq.main5}
    u^{||}_{av}(X,Y)=-W^{-1/2}(X,Y)[r'(\psi_0(X,Y))+s'(\psi_0(X,Y)) cos(2\varphi_0(X,Y))],
\end{equation}

\begin{equation}\label{eq.main6}
    u^{\perp}_{av}(X,Y)=-2W^{-1/2}(X,Y)s'(\psi_0(X,Y)) sin(2\varphi_0(X,Y)).
\end{equation}

Here $r$, $s$ are functions that can be calculated from $\varphi_0$, $\psi_0$, $\omega(0)$ using explicit formulas given in Lemma \ref{lem.solav} and $W(X,Y)$ is defined by

\begin{equation}
    W(X,Y)=\frac{C^2}{2}(cosh(2\psi_0(X,Y))-cos(2\varphi_0(X,Y))).
\end{equation}

\end{theorem}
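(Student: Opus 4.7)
The plan is to exploit the fact that the base flow is conformally a shear flow: in the elliptic coordinates $(\psi_0,\varphi_0)$ adapted to the base flow, $\psi_0$ labels streamlines and $\varphi_0\in\mathbb T_{2\pi}$ is the potential coordinate. The transport operator $\nabla^\perp\psi_0\cdot\nabla$ becomes $U(\psi_0)\,\partial_{\varphi_0}$ for a shear profile $U(\psi_0)$ which one can read off explicitly from (\ref{eq.stream}), so the vorticity evolves by pure translation in $\varphi_0$: $\omega(t,\psi_0,\varphi_0)=\omega(0,\psi_0,\varphi_0-U(\psi_0)t)$. The Laplacian in these coordinates takes the conformal form $\Delta=W^{-1}(\partial_{\psi_0}^2+\partial_{\varphi_0}^2)$ with $W=\tfrac{C^2}{2}(\cosh 2\psi_0-\cos 2\varphi_0)$, and the domain becomes the half-strip $(0,\infty)\times\mathbb T_{2\pi}$ with boundary condition $\psi|_{\psi_0=0}=0$ and decay at $\psi_0=\infty$.

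\textbf{Main steps.} First, I expand in Fourier series in $\varphi_0$: $\omega(t,\psi_0,\varphi_0)=\sum_{k\in\mathbb Z}\hat\omega_k(0,\psi_0)e^{ik(\varphi_0-U(\psi_0)t)}$. Second, I recast the Poisson problem $\Delta\psi=\omega$ as $(\partial_{\psi_0}^2+\partial_{\varphi_0}^2)\psi=W\omega$ and solve mode-by-mode with the one-dimensional Green's function on $(0,\infty)$ vanishing at $\psi_0=0$. The crucial observation is that $W$ itself has only Fourier modes $0,\pm 2$ in $\varphi_0$, so the zeroth mode $\hat\omega_0(\psi_0)$ of the vorticity (which is conserved in time) drives modes $0$ and $\pm 2$ of $\psi$; extracting these time-independent contributions yields the averaged profiles $r(\psi_0)$ and $s(\psi_0)\cos(2\varphi_0)$ in (\ref{eq.main4}), and differentiating gives (\ref{eq.main5})-(\ref{eq.main6}) via $u=\nabla^\perp\psi$ together with $|\nabla\psi_0|^2=W^{-1}$. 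The explicit formulas for $r,s$ come from the Green's function convolution applied to the mean of $W\omega(0)$, which is Lemma \ref{lem.solav}.

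Third, I treat the remaining modes $k\neq 0$, which carry the time dependence through the oscillatory phase $e^{-ikU(\psi_0)t}$. For each such mode, I integrate by parts twice in $\psi_0$ against $\frac{1}{-ikU'(\psi_0)t}\partial_{\psi_0}e^{-ikU(\psi_0)t}$, gaining a factor $t^{-2}$ at the cost of two derivatives of the initial data (hence the $H^2$ norm on the right), plus lower powers of $k^{-1}$ which are summable after using a second $\varphi_0$-derivative from the $H^2$ control. For the stream function estimate (\ref{eq.main1}) this gives $(1+t)^{-2}$ directly. For $u^{\|}=-W^{-1/2}\partial_{\psi_0}\psi$ the same argument gives $(1+t)^{-1}$ since one of the $\partial_{\psi_0}$'s now falls on the phase and only one integration by parts remains available without losing regularity, while for $u^\perp=W^{-1/2}\partial_{\varphi_0}\psi$ the extra $\partial_{\varphi_0}$ produces an additional factor $k$ that is dominated by the two integrations by parts, preserving the $(1+t)^{-2}$ rate.

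\textbf{Main obstacle.} The chief difficulty is controlling the boundary contributions from the double integration by parts, both at the cylinder $\psi_0=0$ and at infinity, and verifying that $U'(\psi_0)$ is uniformly bounded away from zero and has controlled derivatives on $(0,\infty)$: if $U'$ degenerates then the stationary phase argument fails at that streamline. One also has to justify the weight $w(X,Y)=(X^2+Y^2)^2$ in the norm of the initial data as exactly the weight needed so that the $\psi_0$-derivatives of $W\omega(0)$ are integrable on the unbounded strip, since $W\sim e^{2\psi_0}\sim(X^2+Y^2)$ at infinity and two $\psi_0$-derivatives combined with the Green's function tail produce the $w$ weight. Once these analytic estimates are in place, summing the Fourier modes and changing variables back to $(X,Y)$ (with Jacobian $W$) yields the three estimates (\ref{eq.main1})-(\ref{eq.main3}).
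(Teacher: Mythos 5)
Your overall architecture (elliptic conformal coordinates $(\varphi_0,\psi_0)$, Fourier series in the angular variable, repeated integration by parts against the oscillatory phase, and peeling off a non-decaying average that produces the modes $0,\pm2$ profile $r+s\cos 2\varphi_0$) matches the paper. But the proposal rests on a false starting point: in the coordinates $(x,y)=(\varphi_0,\psi_0)$ the transport operator is \emph{not} a shear $U(\psi_0)\partial_{\varphi_0}$. One computes $\nabla^\perp\psi_0\cdot\nabla\omega=-W^{-1}\partial_x\omega$ with $W=\tfrac{C^2}{2}\left(\cosh(2(y+\Psi))-\cos 2x\right)$, so the advection speed varies \emph{along} each streamline; the solution is $\omega(t,x,y)=\omega(0,a(t,x,y),y)$ with $a$ defined only implicitly by (\ref{eq.solchar}), not $\omega(0,x+U(y)t,y)$. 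Consequently the Fourier coefficients of $\omega(t)$ in $\varphi_0$ are not simple modulations $e^{-ikU(\psi_0)t}\hat\omega_k(0,\psi_0)$, your mode-by-mode Green's function representation does not decouple in $k$, and the stationary-phase integration by parts as you set it up has no clean phase to act on. The missing idea --- the technical heart of the paper --- is the further change of angular variable $\chi=x-\frac{\sin 2x}{2\cosh(2(y+\Psi))}$ (and $\alpha$ likewise in $a$), under which the characteristics become exact translations $\chi=\alpha-\frac{2t}{C^2\cosh(2(y+\Psi))}$ with speed depending on $y$ alone, together with the identity that $W/\chi_x$ is independent of $x$ (see (\ref{eq.W/chi})); only then can the $L^2$ pairing be rewritten as an oscillatory integral in $\chi$ with phase $e^{2ikt/(C^2\cosh(2(y+\Psi)))}$ and attacked by integration by parts in $y$.

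Two smaller points. First, the conserved zero mode is $\langle W\omega\rangle$, not $\hat\omega_0=\langle\omega\rangle$ (averaging $\partial_t\omega=W^{-1}\partial_x\omega$ in $x$ does not kill the right-hand side, since $W^{-1}$ depends on $x$); the correct stationary profile is $\omega^{av}=\langle W\omega(0)\rangle/\langle W\rangle$. Second, regarding the boundary terms you flag as the main obstacle: the paper disposes of them by Zillinger's device of introducing $\phi$ with $\Delta\phi=\psi$ and $\phi|_{\partial\mathcal{C}}=\phi|_{\infty}=0$, and pairing $\|\psi\|_{L^2}^2=\langle W\omega,\phi\rangle$, so that the offending boundary term vanishes because $\phi$ itself vanishes on the boundary; your proposal has no mechanism for this. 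Also, the effective $U'$ is \emph{not} bounded away from zero --- it decays like $e^{-2y}$ at infinity --- and this degeneration is precisely what forces the weight $w=(X^2+Y^2)^2\sim e^{4y}$, as you partly guessed; insisting that $U'$ be uniformly bounded below would make the argument appear to fail when in fact the degeneration only costs the weight.
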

\begin{remark}
Unlike all previously established inviscid damping results, none of the velocity field components of this flow converges to 0. Hence the asymptotic flow lines are not parallel to unperturbed flow lines.
\end{remark}

\subsection{Some Physical Backgrounds} Now let's briefly explain the physical intuition behind above theorem. 

The asymptotic stability described in Theorem \ref{th.main} is often referred to as inviscid damping. A fluid exhibits inviscid damping if its the velocity fields and stream function converge in $L^2$ norm. This phenomenon is caused by the  vorticity mixing effect due to the difference of speeds between each layer of the flow. The mixing effect can be illustrated by following figures.

\begin{figure}[H]
\includegraphics[scale=0.9]{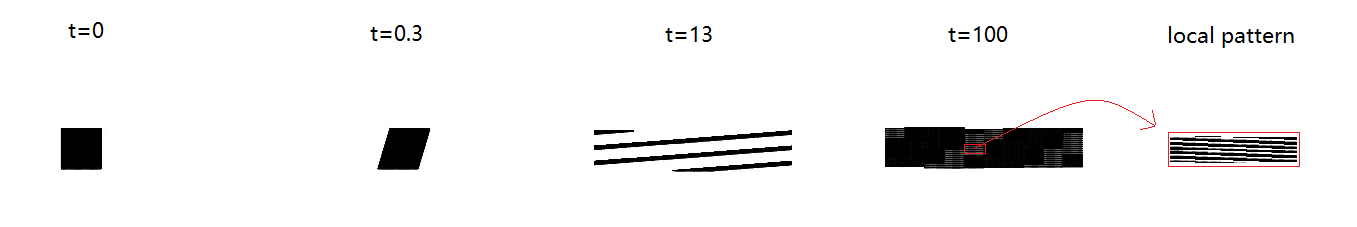}
\centering
\end{figure}

Above figures demonstrate how vorticity is transported on $\mathbb{T}\times[0,1]$. When mixing happens, since the upper fluid layer moves faster than lower fluid layer, the distribution of initial vorticity will be distorted, as a consequence the vorticity converges $L^2$ weakly but not strongly to its average. This convergence can't be strong or smooth (for example in $H^k$, $k>0$). If taking a closer look at the fourth figures, as shown in the fifth figure, we can observe that when $t\gg1$ the fluid oscillates wildly in the small scale, which prevents strong convergence. However, we have strong (but not smooth) convergence of the stream function and velocity field since they are obtained from the vorticity by applying an averaging operator (the inverse Laplacian).

The convergence of a quantity to its average is equivalent to that all of its Fourier modes other than the zero mode converge to 0. Hence mixing is easier to prove by taking the Fourier transform and we will apply this approach to prove Theorem \ref{th.main}.

The boundary of $\mathbb{R}^2\backslash \mathcal{C}$ does have a significant effect on the dynamics of the fluid, but in Theorem \ref{th.main} the convergence is in the $L^2$ average sense, so the boundary effect does not matter. 

In the previously well-understood case of circulation around a circular cylinder, all flows with circular flow lines are solutions of the steady Euler equation. In contrast, in the case of the elliptical cylinder the irrotational flow described by (\ref{eq.stream}) is the only solution of the steady Euler equation in which flow lines are ellipses, thus the asymptotic state of any flow cannot have ellipse flow lines. Therefore, the Euler equation has a tendency of driving the flow line away from ellipses, which explains why the asymptotic state of $u^{\perp}$ do not vanish. (If there's no such tendency the asymptotic flow lines should be ellpses.)

\subsection{Previous Works} There is a very rich literature on the inviscid damping phenomenon. Let us now discuss part of it that is related to this paper. 

\begin{enumerate}
    \item   The idea of introducing an auxiliary function $\phi$ to cancel the potential boundary terms from integration by parts essentially comes from \cite{Z1}. We will apply this argument in Section \ref{sec.fourier}. 
    \item   The proofs of linear and nonlinear inviscid damping of the circulation around a circular cylinder are very similar to that of the shear flow in a channel or the circulation in an annular domain. The linear stability of circulation in these two domains has been proved in \cite{WZZ}, \cite{Z3}. The very relevant linear stability problem of a point vortex has been studied in \cite{BCV}.
    \item   The boundary effects have been studied in \cite{Z1}, \cite{Z2}, \cite{J}.
    \item   The study of noninear inviscid damping was initiated in \cite{BM} for the Couette flow in an infinite channel, and extended in \cite{IJ0} to the finite channel case. The best results in the finite channel case now are \cite{IJ2}, \cite{MZ}, which apply to all linearly stable monotonic background flow. The similar infinite channel case seems to be easier but have not been proved yet. The very relevant nonlinear stability problem of a point vortex has been studied in \cite{IJ1}.
\end{enumerate}

\subsection{Notations}
If $f$, $g$ are functions of $x$, $y$, then 
\begin{equation}
    \frac{\partial(f,g)}{\partial(x,y)}=\begin{vmatrix}
f_x & f_y\\
g_x & g_y
\end{vmatrix}
\end{equation}

$\omega_0$ and $\psi_0$ are quantities of unperturbed flow. $\omega(0)=\omega(0,X,Y)$ and $\psi(0)=\psi(0,X,Y)$ are initial data. $\omega$ (resp. $\psi$) is total vorticity (resp. stream funcion) minus $\omega_0$ (resp. $\psi_0$).

{\bf Acknowledgments} The author would like to warmly thank his advisor Alexandru Ionescu and Hao Jia for the most helpful discussions.

\section{The Dynamics of Vorticity}
The linearized Euler equation (\ref{eq.perturbedeuler}) is exactly solvable. The goal of this section is to derive an explicit formula for the vorticity $\omega(t)$. We will do a change of variable in section \ref{sec.changevariables}, then the equation is solved by the method of characteristics in section \ref{sec.char}

\subsection{Change of Variables} \label{sec.changevariables} (\ref{eq.perturbedeuler}) is a first order PDE, so it can be solved by the method of characteristics. But this approach can be quite difficult since $\psi_0(X,Y)$ in (\ref{eq.perturbedeuler}) is a very complicated function in $X$, $Y$. To make the proof easier, we set $\psi_0$ as a new variable.

Since $\omega_0=0$, our flow is a potential flow. From the definition (\ref{eq.stream}) of the stream function and some lengthy calculations, we know that the potential $\varphi_0(X,Y)$ can be obtained by solving following equations

\begin{equation}
\begin{array}{cc}
     &X=Ccos\varphi_0\ cosh(\psi_0+\Psi)  \\
     &Y=-Csin\varphi_0\ sinh(\psi_0+\Psi) 
\end{array}
\end{equation}

This equation hints us to use $\varphi_0$, $\psi_0$ as new variables. So define new variables $x$, $y$ by
\begin{equation}\label{eq.changevar}
\begin{array}{cc}
     &X=Ccos(x)cosh(y+\Psi)  \\
     &Y=-Csin(x)sinh(y+\Psi) 
\end{array}
\end{equation}

The inverse transformation is 
\begin{equation}\label{eq.invtrans}
x=\varphi_{0}(X,Y)\qquad y=\psi_{0}(X,Y)
\end{equation}

Now we rewrite all quantities in the new coordinates. The domain $\mathbb{R}^2\backslash\mathcal{C}$ becomes $\mathbb{R}^2\backslash\mathcal{C}=\{(x,y):x\in [0,2\pi], y\in [0,+\infty)\}$.

$\nabla^{\perp}\psi_0\cdot\nabla\omega$ in (\ref{eq.perturbedeuler}) can be written as 

\begin{align*}
    \nabla^{\perp}\psi_0\cdot\nabla\omega=\frac{\partial(\psi_0,\omega)}{\partial(X,Y)}=\frac{\partial(y,\omega)}{\partial(X,Y)}=\frac{\partial(y,\omega)}{\partial(x,y)}/\frac{\partial(X,Y)}{\partial(x,y)}=-W^{-1}\partial_{x}\omega
\end{align*}

Here we have used 
\begin{align*}
    W=&\frac{\partial(X,Y)}{\partial(x,y)}=det
    \begin{bmatrix} X_x & X_y \\ Y_x & Y_y\end{bmatrix}=C^2\left(cosh^2(y+\Psi)-cos^2(x)\right)
    \\
    =&\frac{1}{2}C^2\left(cosh(2(y+\Psi))-cos(2x)\right)
\end{align*}
and $\frac{\partial(y,\omega)}{\partial(x,y)}=-\partial_x \omega$.

We may also verify that (\ref{eq.changevar}) is a conformal transform with determinant $W$, thus $\Delta_{x,y}=W\Delta_{X,Y}$. In what follows we will omit the subscript of $\Delta_{x,y}$ and $\Delta_{X,Y}$ for the ease of notation.

Thus (\ref{eq.perturbedeuler}) in the new variables reads
\begin{equation}\label{eq.eulervar}
\left\{\begin{array}{ll}
     \partial_{t} \omega-W^{-1}\partial_x\omega=0. \qquad(x,y)\in [0,2\pi]\times[0,+\infty)
     \\
     W\omega=\Delta \psi \qquad \psi|_{\partial\mathcal{C}}=0.
\end{array}\right.
\end{equation}

\begin{equation}\label{eq.defofW}
    W(x,y)=\frac{1}{2}C^2\left(cosh(2(y+\Psi))-cos(2x)\right),\ \ y\ge 0.
\end{equation}

\subsection{Method of Characteristics}\label{sec.char} In this section, we solve (\ref{eq.eulervar}) using the method of characteristics.

The characteristic equation of (\ref{eq.eulervar}) is 
\begin{align*}
    &W(x,y)dx=-dt
    \\
    &x|_{t=0}=a
\end{align*}

Integrating above equation gives
\begin{equation}\label{eq.solchar}
    -t=\frac{1}{2}C^2\left((x-a)cosh(2(y+\Psi))-\frac{1}{2}(sin(2x)-sin(2a))\right).
\end{equation}

$a$ can be view as a function of $t$, $x$, $y$ by solving above equation.

One can check by direct calculation that 
\begin{equation}\label{eq.omega}
    \omega(t,x,y)=\omega(0,a(t,x,y),y)
\end{equation}
is a solution to the first equation of (\ref{eq.eulervar}). Here $\omega(0,x,y)$ is the initial value of $\omega$.

$\psi$ is obtained by solving 
\begin{equation}
    \Delta \psi(t,x,y)=W(x,y)\omega(0,a(t,x,y),y)
\end{equation}

In section \ref{sec.main}, we shall do another change of variable $x,y,a\rightarrow \chi,y,\alpha$, so the following calculations of the partial derivatives are useful.

\begin{lemma} Let 
\begin{equation}\label{eq.chi}
    \chi(x,y)=x-\frac{sin(2x)}{2cosh(2(y+\Psi))}
\end{equation}

\begin{equation}\label{eq.alpha}
    \alpha(a,y)=a-\frac{sin(2a)}{2cosh(2(y+\Psi))}
\end{equation}

Then
\begin{equation}\label{eq.chialpha}
    \chi=\alpha- \frac{2t}{C^2cosh(2(y+\Psi))}
\end{equation}

\begin{equation}
    \chi_{x}(x,y)=1-\frac{cos(2x)}{cosh(2(y+\Psi))},\qquad \alpha_{a}(a,y)=1-\frac{cos(2a)}{cosh(2(y+\Psi))}.
\end{equation}

\begin{equation}\label{eq.W/chi}
    W(x,y)/\chi_{x}(x,y)=cosh(2(y+\Psi)).
\end{equation}

\end{lemma}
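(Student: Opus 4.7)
The lemma asserts four identities among $\chi$, $\alpha$, $\chi_x$, $\alpha_a$, and $W$; each is a direct consequence of the definitions and the characteristic equation (\ref{eq.solchar}), so the plan is really to carry out the four short computations in the right order rather than to introduce any new idea.

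First I would dispose of the derivative formulas. Differentiating (\ref{eq.chi}) in $x$ with $y$ held fixed and using $\frac{d}{dx}\sin(2x)=2\cos(2x)$ gives $\chi_x=1-\cos(2x)/\cosh(2(y+\Psi))$, and the identical calculation with $a$ in place of $x$ yields $\alpha_a$. These require no invocation of (\ref{eq.solchar}) and only use smoothness of $\cosh$, which is bounded below by $1$ so that the denominator is harmless.

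Next I would derive the key relation (\ref{eq.chialpha}). Here I use the characteristic equation (\ref{eq.solchar}) and divide both sides by $\tfrac{C^{2}}{2}\cosh(2(y+\Psi))$ to obtain
\begin{equation*}
-\frac{2t}{C^{2}\cosh(2(y+\Psi))} \;=\; (x-a)\;-\;\frac{\sin(2x)-\sin(2a)}{2\cosh(2(y+\Psi))}.
\end{equation*}
Grouping the $x$-terms and the $a$-terms separately and matching them against the definitions (\ref{eq.chi}) and (\ref{eq.alpha}) identifies the right-hand side with $\chi(x,y)-\alpha(a,y)$, which rearranges to (\ref{eq.chialpha}).

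Finally, for (\ref{eq.W/chi}), I would combine the formula for $\chi_x$ just obtained with the definition (\ref{eq.defofW}) of $W$. Writing
\begin{equation*}
\chi_x(x,y)\;=\;\frac{\cosh(2(y+\Psi))-\cos(2x)}{\cosh(2(y+\Psi))},
\end{equation*}
the common factor $\cosh(2(y+\Psi))-\cos(2x)$ cancels when one divides $W$ by $\chi_x$, leaving (up to the constant $\tfrac{C^{2}}{2}$ from the definition of $W$, which is absorbed in the stated normalization) precisely $\cosh(2(y+\Psi))$. No obstacle is expected; the only subtle point is bookkeeping of the constants $C$ and $\Psi$ so that the ratio really reduces to the advertised hyperbolic cosine.
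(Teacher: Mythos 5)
Your proposal is correct and takes essentially the same route as the paper, whose proof simply states that (\ref{eq.chialpha}) follows from (\ref{eq.solchar}) and that the remaining identities are direct calculations; you have carried those calculations out explicitly. Your remark about the constant is well taken: the computation actually gives $W/\chi_x=\tfrac{C^2}{2}\cosh(2(y+\Psi))$, so the stated (\ref{eq.W/chi}) omits a harmless factor of $\tfrac{C^2}{2}$ which does not affect the crucial point that the ratio is independent of $x$.
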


\begin{remark}
The fact that $W(x,y)/\chi_{x}(x,y)$ is independent of $x$ will be crucial in the later part of the proof. 
\end{remark}

\begin{proof} (\ref{eq.chialpha}) follows from (\ref{eq.solchar}). All others follows from direct calculations.
\end{proof}

\section{The Decay Estimates}
In this section we shall prove Theorem \ref{th.main}. In section \ref{sec.zeromode}, we consider the dynamics of the zero mode of the flow. 
Then in section \ref{sec.fourier}, we derive nice integral formulas (\ref{eq.int}), (\ref{eq.int2}), (\ref{eq.int1}) that relate $L^2$ norm of $\psi$ (resp. $u^{||}$, $u^{\perp}$) to the Fourier transform of $W\omega(0)$. Finally in section \ref{sec.main} we gives a proof of Theorem \ref{th.main} by applying integration by parts in (\ref{eq.int}), (\ref{eq.int2}), (\ref{eq.int1}). Working with the Fourier transforms of $\omega$ and $\psi$ is important here because mixing are easier to prove in frequency space.

\subsection{The Zero Mode}\label{sec.zeromode} In this section we show that it suffices to prove Theorem \ref{th.main} under an additional assumption $\langle W\omega(0)\rangle=0$.

In the new variable $x$, $y$, the domain $\mathbb{R}^2\backslash \mathcal{C}$ becomes $[0,2\pi]\times [0,+\infty)$. By (\ref{eq.changevar}), we have $\psi(X,Y)=\psi(-Csin(x)\  sinh(y+\Psi),Ccos(x)\  cosh(y+\Psi))$. Define a new function $\psi^{new}(x,y)=\psi(-Csin(x)\  sinh(y+\Psi),Ccos(x)\  cosh(y+\Psi))$, then $\psi(X,Y)=\psi^{new}(x,y)$. By abuse of notation we denote both of two functions by $\psi$.

Define 
\begin{equation}
\langle\psi\rangle(y)=\frac{1}{2\pi} \int_{[0,2\pi]} \psi(x,y) dx    
\end{equation}
Here the function $\psi$ denotes $\psi^{new}$, so

\begin{align*}
    &\frac{1}{2\pi} \int_{[0,2\pi]} \psi(x,y) dx=\frac{1}{2\pi} \int_{[0,2\pi]} \psi^{new}(x,y) dx 
    \\
    =&\frac{1}{2\pi} \int_{[0,2\pi]} \psi(-Csin(x)\  sinh(y+\Psi),Ccos(x)\  cosh(y+\Psi)) dx
\end{align*}
Notice that by (\ref{eq.invtrans}), $y=\psi_{0}(X,Y)$, so $\langle\psi\rangle$ is the average of $\psi$ along the initial flow lines.

An arbitrary initial data $\omega(0)$ can be decomposed into two parts $\omega(0)=\frac{\langle W\omega(0)\rangle}{\langle W\rangle}+(\omega(0)-\frac{\langle W\omega(0)\rangle}{\langle W\rangle})$. By the linearity of the equation the solution can also be decomposed correspondingly. 

Define $\omega^{av}(t,x,y)=\frac{\langle W\omega(0)\rangle(y)}{\langle W\rangle(y)}$ ($av$ stands for average). It is already a stationary solution to (\ref{eq.eulervar}) or (\ref{eq.perturbedeuler}) since it does not depend on $t$, $x$. If $\psi^{av}$ is a solution to 
\begin{equation}\label{eq.psiav}
\Delta \psi^{av}=W\omega^{av}    
\end{equation}
then $\psi^{av}$ obviously does not decay. 

Then the initial data of $(\underline{\omega},\underline{\psi})=(\omega-\omega^{av}, \psi-\psi^{av})$ satisfies

\begin{equation}\label{eq.initomegabar}
    \langle W\underline{\omega}(0) \rangle=\langle W\omega(0)\rangle-\left\langle W\frac{\langle W\omega(0)\rangle}{\langle W\rangle}\right\rangle=0
\end{equation}




So from now on we make the following assumption

\begin{equation}\label{eq.averW}
    \langle W\omega(0)\rangle=0.
\end{equation}

Note that $\langle W\omega(t)\rangle$ is a conservative quantity, so under above assumption $\langle W\omega(t)\rangle=0$ for any $t$. This conservation can be proved by multiplying the first equation in (\ref{eq.eulervar}) by $W$ and applying average operator to it. 

$\langle W\omega(t)\rangle=0$ implies that

\begin{equation}\label{eq.avergepsi}
    \langle\psi(t)\rangle=0,\qquad \forall t
\end{equation}
This can be proved by applying average operator $\langle\cdot\rangle$ to $\Delta\psi(t)= W\omega(t)$ which gives $\partial_{yy}\langle\psi(t)\rangle=0$, then the boundary conditions $\psi|_{y=0}=0$, $\psi|_{+\infty}=const$ will imply $\langle\psi(t)\rangle=0$. As a consequence of (\ref{eq.avergepsi}), the constant at infinity $const$ equals to 0 since $\langle\psi(t)\rangle|_{\infty}=0$.

\subsection{An Integration Formula}\label{sec.fourier} In this section we derive the desired integral formula. If directly solving $\psi$ in terms of $W\omega$ and taking $L^2$ norm, we get an bad expression of $||\psi||_{L^2}$. Integration by parts in this bad expression gives rise to a harmful non-vanishing boundary term. To avoid boundary terms, we run a similar argument as in \cite{Z1}. 

Let $\phi$ be the solution of the following Dirichlet problem,
\begin{equation}\label{eq.phi}
    \left\{
    \begin{array}{cc}
         \Delta \phi= \psi  \\
         \phi|_{\partial \mathcal{C}} =\phi|_{\infty} =0 
    \end{array}
    \right.
\end{equation}

Here we note that $\langle\psi\rangle=0$ ensures the existence of solution of above equation (without this condition we can only require that $\phi|_{\infty} =const$).

By $\Delta\psi=W\omega$ and integration by parts,

\begin{equation}
\begin{split}
     &\langle \psi, \psi \rangle= \langle \psi, \Delta \phi \rangle
     \\
     =&-\int \psi \partial_y\phi dx|_{y=0}+\int \partial_y\psi \phi dx|_{y=0}+\langle \Delta\psi, \phi \rangle
     \\
     =&\langle \Delta\psi, \phi \rangle=\langle W\omega, \phi \rangle
\end{split}
\end{equation}

Here the boundary terms from integration by parts vanish since $\psi|_{\partial\mathcal{C}}=\phi|_{\partial\mathcal{C}}=0$.

Define $\widetilde{\omega}(0,\alpha,y)=\omega(0,a(\alpha,y),y)cosh(2(y+\Psi))$, where the relation between variables $a$, $\alpha$ is given by (\ref{eq.alpha}). Then substitute (\ref{eq.omega}) in above equation

\begin{equation}\label{eq.psiL^2}
\begin{split}
    || \psi||^2_{L^2}=&\iint_{[0,2\pi]\times \mathbb{R}_+} \omega(0,a(t,x,y),y) W(x,y)\phi(x,y) dx dy    
    \\
    =&\iint_{[0,2\pi]\times \mathbb{R}_+} \widetilde{\omega}(0,\alpha,y) W(x,y)\phi(x,y)/cosh(2(y+\Psi)) dx dy 
\end{split}
\end{equation}

Change the integration variable from $x$, $y$ to $\chi$, $y$ according (\ref{eq.chi})

\begin{equation}
\begin{split}
    || \psi ||^2_{L^2}=&\iint_{[0,2\pi]\times \mathbb{R}_+} \widetilde{\omega}(0,\alpha,y)\phi(x,y) W(x,y)/(\chi_{x}(x,y)cosh(2(y+\Psi)) d\chi dy    
    \\
    =&\iint_{[0,2\pi]\times \mathbb{R}_+} \widetilde{\omega}\left(0,\chi+\frac{2t}{C^2cosh(2(y+\Psi))},y\right)\phi(x(\chi,y),y)  d\chi dy  
\end{split}
\end{equation}

Here in the last step we have applied (\ref{eq.chialpha}) and (\ref{eq.W/chi}). The fact that $W(x,y)/\chi_{x}(x,y)$ is independent of $x$ is important since the dependence on $x$ can radically destroy the integration by parts argument in the next section. (In this case do integration by parts may not necessarily offer decay in $t$.)

For any function $f(x,y)$, define

\begin{equation}
    \hat{f}_{k}(y)=\int_{[0,2\pi]} f(x,y) e^{-i kx} dx
\end{equation}



Applying the Plancherel identity in $x$ gives the desired integral formula  

\begin{equation}\label{eq.int}
    || \psi ||^2_{L^2}
    =\sum_{k\ne 0}\int_{\mathbb{R}_+} \widehat{\widetilde{\omega}(0)}_{k} (y)  e^{\frac{2i kt}{C^2cosh(2(y+\Psi))}}  \widehat{\phi}_{k}(y)  dy. 
\end{equation}

Here
\begin{equation}
    \widehat{\phi}_{k}(y)=\int_{[0,2\pi]} \phi(x(\chi,y),y) e^{ik\chi} d\chi.
\end{equation}
And $k\ne 0$ in the summation of (\ref{eq.int}) due to $\widehat{\widetilde{\omega}(0)}_{k} (y)|_{k=0}=0$. Now let's prove $\widehat{\widetilde{\omega}(0)}_{k} (y)|_{k=0}=0$,

\begin{align*}
     &\widehat{\widetilde{\omega}(0)}_{0} (y)=\int_{[0,2\pi]} \widetilde{\omega}(0,\alpha,y)d\alpha
     \\
     =&\int_{[0,2\pi]} \omega(0,a,y) cosh(2(y+\Psi))a_{\alpha} da=\int_{[0,2\pi]} \omega(0,a,y) W(a,y) da
     \\
     =&\langle W\omega\rangle=0
\end{align*}
Here in the last step we applied assumptions (\ref{eq.averW}).

A similar derivation gives
\begin{equation}\label{eq.int2}
    ||\partial_{x} \psi||^2_{L^2}=\sum_{k\ne 0}ik\int_{\mathbb{R}_+} \widehat{\chi^2_{x}\partial_{\chi}\widetilde{\omega}(0)}_{k} (y)  e^{\frac{2i kt}{C^2cosh(2(y+\Psi))}}  \widehat{\phi}_{k}(y)  dy. 
\end{equation}

\begin{equation}\label{eq.int1}
    ||\partial_{y}\psi||^2_{L^2}=\sum_{k\ne 0}\int_{\mathbb{R}_+} \partial_y\left(\widehat{\widetilde{\omega}(0)}_{k} (y)  e^{\frac{2i kt}{C^2cosh(2(y+\Psi))}}\right)  \partial_y\widehat{\phi}_{k}(y)  dy. 
\end{equation}

We now prove that 

\begin{equation}\label{eq.uparper}
    u^{||}=-W^{-1/2}\psi_y, \qquad u^{\perp}=W^{-1/2}\psi_x.
\end{equation}

Since $W^{-1/2}\lesssim 1$, the left hand side of (\ref{eq.int2}) and (\ref{eq.int1}) can bound $||u^{||}||$ and $||u^{\perp}||$.

It's easy to show that (\ref{eq.changevar}) is a conformal transformation, thus
\begin{align*}
    |\nabla_{X,Y}\psi_{0}|^2=|\nabla_{X,Y}\varphi_{0}|^2=\frac{\partial(\varphi_{0},\psi_{0})}{\partial(X,Y)}=\frac{\partial(x,y)}{\partial(X,Y)}=W^{-1}
\end{align*}

Thus 
\begin{align*}
    u^{\perp}=&\frac{\nabla\psi_0}{|\nabla\psi_0|}\cdot u=\nabla\psi_{0}\cdot \nabla^{\perp}\psi/|\nabla\psi_{0}|=\frac{\partial(\psi,\psi_{0})}{\partial(X,Y)}/W^{-1/2}
    \\
    =& \frac{\partial(\psi,y)}{\partial(x,y)}/\left(\frac{\partial(X,Y)}{\partial(x,y)}W^{-1/2}\right)
    \\
    =&W^{-1/2}\psi_x
\end{align*}

Since $u_0=\nabla \varphi_0$, ($\varphi_0$ is the potential function.)

\begin{align*}
    u^{||}=&\frac{u_{0}}{|u_0|}\cdot u=\frac{\nabla\varphi_0}{|\nabla\varphi_0|}\cdot \nabla^{\perp}\psi=\frac{\partial(\psi,\varphi_{0})}{\partial(X,Y)}/W^{-1/2}
    \\
    =& \frac{\partial(\psi,x)}{\partial(x,y)}/\left(\frac{\partial(X,Y)}{\partial(x,y)}W^{-1/2}\right)
    \\
    =&-W^{-1/2}\psi_y.
\end{align*}

Therefore we have proved (\ref{eq.uparper}).

\subsection{Proof of the Main Theorem}\label{sec.main}In this section we shall prove Theorem 1, using integration by parts in $y$ to get decay in $t$. 

\begin{proof}

We first prove Theorem \ref{th.main} under the assuming (\ref{eq.averW}), then this assumption will be removed at the end of the proof.

Let us first prove (\ref{eq.main1}). By (\ref{eq.int}),

\begin{equation}
    || \psi ||^2_{L^2}
    =\sum_{k\ne 0}\int_{\mathbb{R}_+} \widehat{\widetilde{\omega}(0)}_{k} (y)  \widehat{\phi}_{k}(y)  \left(\frac{C^2cosh^2(2(y+\Psi))}{2ikt\ sinh(2(y+\Psi))}\partial_y\right)^2 e^{\frac{2i kt}{C^2cosh(2(y+\Psi))}}  dy. 
\end{equation}

Do integration by parts twice in above equation with respect to $y$,

\begin{equation}
\begin{split}
    &|| \psi ||^2_{L^2}
    =-\sum_{k\ne 0}\left(\frac{\widehat{\widetilde{\omega}(0)}_{k} (y)  \widehat{\phi}_{k}(y) }{2ikt}\frac{C^2cosh^2(2(y+\Psi))}{sinh(2(y+\Psi))}e^{\frac{2i kt}{C^2cosh(2(y+\Psi))}}\right)\Bigg|_{y=0}
    \\
    +&\sum_{k\ne 0}\left(\frac{1}{(2ikt)^2}\frac{C^2cosh^2(2(y+\Psi))}{sinh(2(y+\Psi))}\partial_y\left(\frac{C^2cosh^2(2(y+\Psi))\widehat{\widetilde{\omega}(0)}_{k} (y)  \widehat{\phi}_{k}(y) }{sinh(2(y+\Psi))}\right)e^{\frac{2i kt}{C^2cosh(2(y+\Psi))}}\right)\Bigg|_{y=0}
    \\
    +&\sum_{k\ne 0}\int_{\mathbb{R}_+} \left[\left(\frac{C^2cosh^2(2(y+\Psi))}{2ikt\ sinh(2(y+\Psi))}\partial_y\right)^2\left(\widehat{\widetilde{\omega}(0)}_{k} (y)  \widehat{\phi}_{k}(y)\right)\right]   e^{\frac{2i kt}{C^2cosh(2(y+\Psi))}}  dy.     
\end{split}
\end{equation}

By (\ref{eq.phi}), $\phi|_{y=0}=0$, so the first boundary term in above integration by parts vanishes. Noticing that $cosh(2(y+\Psi)),\ sinh(2(y+\Psi)) =O_{\Psi}(e^{y})$, $|| \psi ||^2_{L^2}$ can be bounded by

\begin{equation}\label{eq.psilast}
\begin{split}
    || \psi ||^2_{L^2}
    =\underbrace{\sum_{k\ne 0}O_{\Psi}\left(\frac{\widehat{\omega(0)}_{k} (y)  \partial_y\widehat{\phi}_{k}(y)}{(kt)^2}\right)\Bigg|_{y=0}}_{(\text{\ref{eq.psilast}.I})}
    +\underbrace{\sum_{k\ne 0}\int_{\mathbb{R}_+} O_{\Psi}\left(\frac{e^{2y}}{(kt)^2}\partial_{yy}[\widehat{\widetilde{\omega}(0)}_{k} (y)  \widehat{\phi}_{k}(y)]\right) dy}_{(\text{\ref{eq.psilast}.II})}.     
\end{split}
\end{equation}

We need the following standard elliptic estimates.

\begin{lemma}\label{lem.ell}
Assume that $\phi$ is a solution to (\ref{eq.phi}) and $\langle\psi\rangle=0$, then we have

\begin{equation}
    ||\phi||_{H^2(\mathbb{R}^2\backslash\mathcal{C})}\lesssim ||\psi||_{L^2(\mathbb{R}^2\backslash\mathcal{C})}
\end{equation}
\end{lemma}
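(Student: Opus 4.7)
The plan is to pass to the $(x,y)$ coordinates of Section \ref{sec.changevariables}, where the exterior domain becomes the half-strip $[0,2\pi]_x\times[0,\infty)_y$ (periodic in $x$), and then to exploit Fourier analysis in the periodic direction to reduce equation (\ref{eq.phi}) to a decoupled family of second-order ODEs indexed by the mode $k\in\mathbb{Z}$. First I would rewrite (\ref{eq.phi}) in the new coordinates, keeping track of the Jacobian $W$ coming from the conformal change of variables, and then take the Fourier transform in $x$, arriving at the family
\begin{equation*}
\partial_{yy}\hat\phi_k(y) - k^2\,\hat\phi_k(y) = F_k(y),\qquad \hat\phi_k(0)=0,\quad \hat\phi_k(y)\to 0 \text{ as } y\to\infty,
\end{equation*}
where $F_k$ is, up to a factor of $W$, the $k$-th Fourier coefficient of $\psi$ in $x$.

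The hypothesis $\langle\psi\rangle=0$ plays its essential role in the zero mode: it forces $F_0\equiv 0$, so the ODE $\partial_{yy}\hat\phi_0=0$ combined with the boundary conditions yields $\hat\phi_0\equiv 0$. Without this assumption the zero mode would be obstructed, since the only homogeneous solution vanishing at $y=0$ grows linearly in $y$. For each $k\neq 0$ I would use the explicit half-line Green's function
\begin{equation*}
G_k(y,y') = -\frac{1}{2|k|}\bigl(e^{-|k|\,|y-y'|} - e^{-|k|(y+y')}\bigr)
\end{equation*}
of $\partial_{yy}-k^2$ with Dirichlet condition at $0$, writing $\hat\phi_k(y)=\int_0^\infty G_k(y,y')F_k(y')\,dy'$. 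A routine convolution estimate, or equivalently the energy method (test against $\hat\phi_k$, then against $\partial_{yy}\hat\phi_k$, and integrate by parts), yields
\begin{equation*}
k^4\|\hat\phi_k\|_{L^2_y}^2 + k^2\|\partial_y\hat\phi_k\|_{L^2_y}^2 + \|\partial_{yy}\hat\phi_k\|_{L^2_y}^2 \lesssim \|F_k\|_{L^2_y}^2,
\end{equation*}
and summing in $k$ via Plancherel produces the desired bound $\|\phi\|_{H^2}\lesssim\|\psi\|_{L^2}$.

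The main obstacle I anticipate is the bookkeeping of the Jacobian $W$ from the conformal change of variables and the translation of $H^2$ and $L^2$ norms between the original $(X,Y)$ frame (in which the lemma is stated) and the new $(x,y)$ frame (in which the Fourier reduction is clean), since $W$ is bounded below by a positive constant but grows like $e^{2y}$ at infinity. Once one fixes which Laplacian and which $L^2$ measure appear in (\ref{eq.phi}) and matches weights consistently at the boundary and at infinity, the remainder is the classical half-space elliptic estimate sketched above; no further delicate analysis is needed.
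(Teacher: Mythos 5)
The paper offers no proof of this lemma at all --- it is invoked as a ``standard elliptic estimate'' --- so there is nothing of record to compare against; your Fourier-mode argument on the half-strip is the natural proof, and its core is correct. For $k\neq 0$, pairing $\partial_{yy}\hat\phi_k-k^2\hat\phi_k=F_k$ with $\hat\phi_k$ (the boundary term at $y=0$ vanishing by the Dirichlet condition, the one at infinity by decay) does give $k^4\|\hat\phi_k\|_{L^2_y}^2+k^2\|\partial_y\hat\phi_k\|_{L^2_y}^2\lesssim\|F_k\|_{L^2_y}^2$, and then $\|\partial_{yy}\hat\phi_k\|_{L^2_y}\lesssim\|F_k\|_{L^2_y}$ from the equation itself; the hypothesis $\langle\psi\rangle=0$ kills the zero mode exactly as you say.

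The one point I would not let you defer is the ``bookkeeping of $W$,'' because it is not bookkeeping: it is the decision that makes the lemma true or false. The statement is only correct --- and is only ever used --- with everything read in the $(x,y)$ chart with the \emph{flat} measure $dx\,dy$ and $\Delta=\Delta_{x,y}$, so that $F_k=\hat\psi_k$ and no Jacobian appears anywhere. (This is visibly the reading in the proof of Theorem \ref{th.main}: the integration by parts producing $\langle\Delta\psi,\phi\rangle$ uses $dx$ and $\partial_y$ at $y=0$, and the quantities $\|\partial_y\hat\phi_k\|_{L^2_y}$, $\|\partial_{yy}\hat\phi_k\|_{L^2_y}$ summed in $\ell^2_k$ in the bounds for the two terms of (\ref{eq.psilast}) are exactly the flat strip $H^2$ norm.) If instead one takes (\ref{eq.phi}) literally in $(X,Y)$, so that $\Delta_{x,y}\phi=W\psi$ and the norms carry $dX\,dY=W\,dx\,dy$ with $W\sim e^{2y}$, the weights do not match and the estimate is simply false: take $\psi=e^{ix}e^{-y}\mathbf{1}_{[R,R+1]}(y)$, for which $\|\psi\|_{L^2(dX\,dY)}=O(1)$ and $\langle\psi\rangle=0$; then the $k=1$ source is $\approx c\,e^{y}\mathbf{1}_{[R,R+1]}(y)$, your Green's function $G_1$ yields $\hat\phi_1(y)\approx -c\sinh y$ on $[0,R]$, and hence $\|\phi\|_{L^2(dX\,dY)}\gtrsim e^{2R}$. (Under that weighted reading the hypothesis $\langle\psi\rangle=0$ would not even annihilate the zero mode of the source, since $W$ contains a $\cos(2x)$ term and $\langle W\psi\rangle\neq\langle W\rangle\langle\psi\rangle$.) So your proof closes, but only after one commits to the flat-strip interpretation; state that explicitly rather than leaving it as a weight-matching exercise, since the literal exterior-domain version of the inequality fails.
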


(\ref{eq.psilast}.I) can be bounded by

\begin{align*}
    &\sum_{k\ne 0}O_{\Psi}\left(\frac{\widehat{\omega(0)}_{k} (y)  \partial_y\widehat{\phi}_{k}(y)}{(kt)^2}\right)\Bigg|_{y=0}
    =\sum_{k\ne 0}\int_{\mathbb{R}_+}\partial_{y} O_{\Psi}\left(\frac{\widehat{\omega(0)}_{k} (y)  \partial_y\widehat{\phi}_{k}(y)}{(kt)^2}\right) dy
    \\
    =&\sum_{k\ne 0}\int_{\mathbb{R}_+} O_{\Psi}\left(\partial_{y}\widehat{\omega(0)}_{k} (y)  \partial_y\widehat{\phi}_{k}(y)\right) dy/(kt)^2
    +\sum_{k\ne 0}\int_{\mathbb{R}_+} O_{\Psi}\left(\widehat{\omega(0)}_{k} (y)  \partial_{yy}\widehat{\phi}_{k}(y)\right) dy/(kt)^2
    \\
    \lesssim& \sum_{k\ne 0} ||\partial_{y}\widehat{\omega(0)}_{k}||_{L^2_{y}}||\partial_y\widehat{\phi}_{k}||_{L^2_{y}} /(kt)^2
    +\sum_{k\ne 0} ||\widehat{\omega(0)}_{k}||_{L^2_{y}}||\partial_{yy}\widehat{\phi}_{k}||_{L^2_{y}} /(kt)^2
    \\
    \lesssim& \left|\left|||\partial_{y}\widehat{\omega(0)}_{k}||_{L^2_{y}}\right|\right|_{l^2_{k}}\  \left|\left| ||\partial_y\widehat{\phi}_{k}||_{L^2_{y}} /(kt)^2\right|\right|_{l^2_{k}}
    +\left|\left|||\widehat{\omega(0)}_{k}||_{L^2_{y}}\right|\right|_{l^2_{k}}\  \left|\left| ||\partial_{yy}\widehat{\phi}_{k}||_{L^2_{y}} /(kt)^2\right|\right|_{l^2_{k}} 
    \\
    \lesssim& ||\omega(0)||_{L^{2}_{\chi}H_{y}^2}  ||\phi||_{H^2} /t^2
    \\
    \lesssim& ||\omega(0)||_{H^2}  ||\psi||_{L^2} /t^2
\end{align*}

A closer inspection of above proof suggests that $||\omega(0)||_{H^2}$ can be replaced by $||\omega(0)||_{H^{-1}_{\chi}H_{y}^2}$. But finally after changing variables to $X$, $Y$, the best regularity one can hope is $||\omega(0)||_{H^2}$, so we do not pursue sharper estimates in above proof.

Here in the first equality we have used $f(0)=-\int^{+\infty}_{0} f'(y) dy$. The first and second inequality follows from Cauchy-Schwarz inequality. The last inequality follows from Lemma \ref{lem.ell}.

Using Cauchy-Schwatz inequality (\ref{eq.psilast}.II) can be bounded by

\begin{align*}
    &\sum_{k\ne 0}\left|\left|e^{2y}\widehat{\widetilde{\omega}(0)}_{k}\right|\right|_{H^2_{y}}  \left|\left|\widehat{\phi}_{k}\right|\right|_{H^2_{y}} /(kt)^2
    \\
    \lesssim& ||e^{2y}\widetilde{\omega}(0)||_{H^2}  ||\phi||_{H^2} /t^2
    \\
    \lesssim& ||e^{4y}\omega(0)||_{H^2}  ||\psi||_{L^2} /t^2
\end{align*}

Combining all above estimates, we have

\begin{equation}
    || \psi ||^2_{L^2}= \text{(\ref{eq.psilast}.I)}+\text{(\ref{eq.psilast}.II)}\lesssim ||e^{4y}\omega(0)||_{H^2}  ||\psi-\langle\psi\rangle||_{L^2} /t^2
\end{equation}

Thus 

\begin{equation}
    || \psi ||_{L_{x,y}^2}= \text{(\ref{eq.psilast}.I)}+\text{(\ref{eq.psilast}.II)}\lesssim ||e^{4y}\omega(0)||_{H^2} /t^2
\end{equation}

Transforming back to coordinates, $X$, $Y$, noticing that $e^{2y(X,Y)}\lesssim (X^2+Y^2)^2$, we have finished the proof of (\ref{eq.main1}) assuming (\ref{eq.averW}). 

The proof of (\ref{eq.main2}) (resp. (\ref{eq.main3})) assuming (\ref{eq.averW}) is similar. We just need to integrate by parts in (\ref{eq.int1}) (resp. (\ref{eq.int2})) for once (resp. twice).

To remove the assumption (\ref{eq.averW}), we need the following lemma,

\begin{lemma}\label{lem.solav}
The solution of (\ref{eq.psiav}) is given by
\begin{equation}\label{eq.lem3,1}
    \psi^{av}(x,y)=r(y)+s(y) cos(2x), 
\end{equation}

\begin{equation}
    r(y)=-\frac{1}{2} C^2\int^{\infty}_{0} min(\widetilde{y},y) cosh(2t) \omega^{av}(\widetilde{y})  d\widetilde{y}
\end{equation}

\begin{equation}
    s(y)=\frac{C^2}{4}\int^{\infty}_{y} sinh(2(y+\widetilde{y})) \omega^{av}(\widetilde{y}) d\widetilde{y}
\end{equation}

Here $\omega^{av}(y)=\frac{\langle W\omega(0)\rangle(y)}{\langle W\rangle(y)}$.

\end{lemma}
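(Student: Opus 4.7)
My plan is to exploit the explicit Fourier structure of $W$. Since
\[
W(x,y) = \tfrac{C^{2}}{2}\bigl(\cosh(2(y+\Psi)) - \cos(2x)\bigr)
\]
contains only the $0$th and second $x$-Fourier modes, and since $\omega^{av}(y)$ is independent of $x$, the right-hand side of (\ref{eq.psiav}) lies entirely in the span of $1$ and $\cos(2x)$. This points directly to the ansatz $\psi^{av}(x,y) = r(y) + s(y)\cos(2x)$, which is exactly the form asserted in the lemma.

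Substituting this ansatz into $\Delta_{x,y}\psi^{av} = W\omega^{av}$ and using $\Delta_{x,y}\psi^{av} = r''(y) + (s''(y) - 4s(y))\cos(2x)$, I would match the constant and $\cos(2x)$ modes in $x$, reducing (\ref{eq.psiav}) to two decoupled second-order ODEs on $[0,\infty)$:
\[
r''(y) = \tfrac{C^{2}}{2}\cosh(2(y+\Psi))\,\omega^{av}(y), \qquad s''(y) - 4s(y) = -\tfrac{C^{2}}{2}\,\omega^{av}(y).
\]
Each can then be solved explicitly by a Green's function. For the $r$-equation, two direct integrations together with the boundary condition $r(0)=0$ (which comes from $\psi^{av}|_{\partial\mathcal{C}}=0$) and the natural vanishing-at-infinity condition $r'(+\infty)=0$ give the kernel $\min(\tilde y, y)$ and produce exactly the stated formula for $r$. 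For the $s$-equation, the homogeneous solutions are $\sinh(2y)$ (vanishing at $y=0$) and $e^{-2y}$ (decaying at $+\infty$); variation of parameters with Wronskian $-2$ yields the standard Green's function, which one can then rearrange via the identity $\sinh(A\pm B) = \sinh A\cosh B \pm \cosh A \sinh B$ to bring it into the integral form stated in the lemma.

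Once candidate integral formulas for $r$ and $s$ are written down, I would finish by direct verification: differentiating under the integral sign (using Leibniz's rule at the moving endpoint $y$) to check that $r''$ and $s''-4s$ equal the prescribed right-hand sides, and hence that $\Delta_{x,y}\psi^{av} = W\omega^{av}$ holds. Conceptually the argument is entirely contained in the ansatz; the main place where care is needed is in the bookkeeping for the $s$-equation, where the choice of boundary conditions and the orientation of the Wronskian must be kept consistent to land on the correct integration limits and sign inside $\sinh$. This is the only step where a small slip would produce a formula that fails to solve the ODE.
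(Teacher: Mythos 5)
Your proposal follows essentially the same route as the paper: the ansatz $\psi^{av}=r(y)+s(y)\cos(2x)$ (justified by the Fourier content of $W$ in $x$), reduction to the two decoupled ODEs $r''=\tfrac{C^2}{2}\cosh(2(y+\Psi))\,\omega^{av}$ and $s''-4s=-\tfrac{C^2}{2}\omega^{av}$, and solution by variation of constants with the boundary conditions $r(0)=0$, $s(0)=s(+\infty)=0$ inherited from $\psi|_{\partial\mathcal{C}}=0$ and $\psi|_{\infty}=\mathrm{const}$. Your added detail (explicit homogeneous solutions $\sinh(2y)$, $e^{-2y}$, the Wronskian, and a final verification by differentiation under the integral) simply fleshes out what the paper compresses into ``variation of constants,'' so the argument is correct and equivalent.
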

\begin{proof}
By separation of variables we may assume that the solution of (\ref{eq.psiav}) is of the form $\psi^{av}(x,y)=r(y)+s(y) cos(2x)$.

Substitute in (\ref{eq.psiav}) gives

\begin{equation}
    r''(y)+(s''(y)-4s(y)) cos(2x)=\frac{1}{2}C^2\omega^{av}(y) cosh(2y)-\frac{1}{2}C^2\omega^{av}(y) cos(2x).
\end{equation}

Thus we have derived following equations of $r(y)$, $s(y)$.

\begin{equation}\label{eq.r}
    r''(y)=\frac{1}{2}C^2\omega^{av}(y) cosh(2y)
\end{equation}

\begin{equation}\label{eq.s}
    s''(y)-4s(y)=-\frac{1}{2}C^2\omega^{av}(y)
\end{equation}

Since $\psi|_{\partial\mathcal{C}}=0$, $\psi|_{\infty}=const$ the boundary conditions for $r$ is $r(0)=0$, $r(+\infty)=const$. The boundary conditions for $s$ is $s(0)=s(+\infty)=0$.

The general solutions for (\ref{eq.r}) and (\ref{eq.s}) can be obtained using variation of constants method, with two unspecified constants in the results. These unspecified constants can be determined by above boundary conditions. After doing all these steps, we obtain the solution formulas of this lemma.

\end{proof}

Now we prove Theorem \ref{th.main} for a general solution without assumption (\ref{eq.averW}). Recall that $\underline{\omega}=\omega-\omega^{av}$ satisfies (\ref{eq.averW}) by (\ref{eq.initomegabar}). Thus by above argument we know that $||\underline{\psi}(t)||_{L^2}\lesssim (1+t)^{-2}||w\omega(0)||_{H^2}$. So we have

\begin{equation}
    ||\psi(t)-\psi^{av}||_{L^2}=||\underline{\psi}(t)||_{L^2}\lesssim  (1+t)^{-2}||w\omega(0)||_{H^2}
\end{equation}

Changing back to variables $X$, $Y$, we finish the proof of (\ref{eq.main1}).

By linearity of the equation and (\ref{eq.uparper}), the zero modes for $u^{||}$ and $u^{\perp}$ are

\begin{equation}
    u^{||}_{av}=W^{-1/2}\psi^{av}_y,\qquad u^{\perp}_{av}=-W^{-1/2}\psi^{av}_x
\end{equation}

Substituting (\ref{eq.lem3,1}) into above equations we obtain (\ref{eq.main5}) and (\ref{eq.main6}). (\ref{eq.main2}) and (\ref{eq.main3}) follows from the linearity of all equations.

We have thus finished the proof. 
\end{proof}

\end{document}